\def\e{\epsilon}
\def\lf{\left}
\def\ri{\right}
\def\p{\partial}
\def\jbar{{\bar\jmath}}
\def\K{K\"ahler }
\def\KRF{K\"ahler-Ricci flow }
\def\A{Amp\`{e}re }
\def\H{H\"{o}lder }
\def\be{\begin{equation}}
\def\ee{\end{equation}}
\def\lf{\left}
\def\ri{\right}
\def\e{\epsilon}
\def\ijb{{i\jbar}}
\def\p{\partial}
\def\p{\partial}
\def\p{\partial}
\def\KRF{K\"ahler-Ricci flow }
\newtheorem{thm}{Theorem}
\newtheorem{lem}{Lemma}
\newtheorem{cor}{Corollary}
\theoremstyle{definition}
\theoremstyle{remark}
\begin{document}

\title{A $C^0$-estimate for the parabolic Monge-Amp\`{e}re equation on complete non-compact K\"ahler manifolds}
\author{Albert Chau$^1$}

\address{Department of Mathematics,
The University of British Columbia, Room 121, 1984 Mathematics
Road, Vancouver, B.C., Canada V6T 1Z2} \email{chau@math.ubc.ca}

\author{Luen-Fai Tam$^2$}

\thanks{$^1$Research
partially supported by NSERC grant no. \#327637-06}
\thanks{$^2$Research partially supported by Hong Kong RGC General Research Fund  \#GRF 2160357}

\address{The Institute of Mathematical Sciences and Department of
 Mathematics, The Chinese University of Hong Kong,
Shatin, Hong Kong, China.} \email{lftam@math.cuhk.edu.hk}

\date{July 2008}

\maketitle\markboth{Albert Chau and Luen-Fai Tam} {$C^0$-estimate
for the parabolic complex Monge-Amp\`{e}re equation }
\begin{abstract}

In this article we study  the \K Ricci flow, the corresponding parabolic Monge \A
equation  and complete non-compact \K Ricci flat
manifolds.  In our main result Theorem \ref{mainthm} we prove that
if $(M, g)$ is sufficiently close to being \K Ricci flat in a
suitable sense, then the \KRF \eqref{KRF} has a long time smooth
solution $g(t)$ converging smoothly uniformly on compact sets to a
complete \K Ricci flat metric on $M$.  The main step is to obtain
a uniform $C^0$-estimates for the corresponding parabolic Monge \A
equation. Our results on this can be viewed as a parabolic version
of the main results in \cite{TY3} on the elliptic Monge \A
equation.

\end{abstract}

{\bf 1.} Let $(M^n,g_0)$ be a complete non-compact \K manifold
with complex dimension $n$ and consider the following \KRF on $M$:

 \begin{equation}\label{KRF}
    \left\{%
\begin{array}{ll}
    \dfrac{\p   {g}_{i\jbar}}{\p t} =- R_{i\jbar}
      \\
     g_\ijb(x, 0) =(g_0)_\ijb \\
\end{array}%
\right.
\end{equation}

We are interested in studying when \eqref{KRF} admits a long time
solution  $g(t)$ converging smoothly on $M$ to a complete
\K metric $g(\infty)$.  Such a limit $g(\infty)$ must be \K
Einstein with zero scalar curvature by \eqref{KRF}.  We are thus
interested in studying when  $(M^n,g_0)$ converges to a \K Ricci
flat metric under \eqref{KRF}.   When $M$ is compact Cao \cite{Ca}
established that a necessary and sufficient condition for such
convergence is that:

\begin{equation}\label{Riccipotential} (R_0)_{i\jbar} =(f_0)_{i\jbar}\end{equation}
for a smooth potential function $f_0$ on $M$, thus re-establishing
the  famous Calabi Conjecture first proved by Yau
\cite{Yau-1978-Ricci-flat}.  In Theorem \ref{mainthm} we establish
a non-compact version of Cao's result. We prove that when
$(M^n,g_0)$ is complete, non-compact with bounded curvature,
  with volume growth $V_{x_0}(r)\le Cr^{2n}$ for some $x_0$  and $C$ for all $r$,
   and satisfyies a Sobolev inequality, then:

 \begin{it} Under the above conditions, the \KRF  \eqref{KRF} has a long
 time solution $g(t)$ converging smoothly on $M$ provided
 \eqref{Riccipotential}
 is satisfied and
 $|f_0|(x)\le \frac{C}{1+\rho_0^{2+\e}(x)}$ for
 some $C, \e>0$ and all $x$.\end{it}

 See Theorem \ref{mainthm} for details.   The result  is motivated by 
 the work of Tian-Yau  \cite{TY3}  who  proved the existence of a \K Ricci flat
  metric in the compliment of a divisor in a projective variety
  under certain conditions. They first constructed a K\"ahler
  metric satisfying the condition in Theorem \ref{mainthm} and
  then solve the  elliptic complex Monge \A equation to obtain a
  \K Ricci flat metric. Our results
    here can be viewed as a parabolic version of the results on the elliptic
     Monge \A equation in \cite{TY3}.
      We also refer to \cite{Chau,Ca}  for results on convergence of the \KRF to K\"ahler Eienstein metrics with negative scalar curvature. In \cite{Ca}, it was
proved that (1) converges after rescaling
 to a K\"ahler Eienstein metric with negative scalar curvature provided
 $(R_0)_{i\jbar}+(g_0)_{i\jbar}=(f_0)_{i\jbar} $ for smooth $f_0$.  A
 non-compact version of this result was proved in \cite{Chau}.
\vskip .5cm


{\bf 2.} Let $(M^n,g_0)$ be a complete non-compact \K manifold
with complex dimension $n$ such that \eqref{Riccipotential} holds for
some smooth potential $f_0$ on $M$.  We want to study the long
time behavior of the \KRF \eqref{KRF}, when $f_0$ is sufficiently
close to zero in a suitable sense.  Note that $(M^n, g_0)$ is \K
Einstein with zero scalar curvature when $f_0=0$.  We are thus
interested in the behavior of the \KRF on complete \K manifolds
which are close to being \K Einstein.   We
will prove the following:

\begin{thm}\label{mainthm} Let $(M^n,g_0)$ be a complete non-compact
\K manifold with bounded curvature and $n\ge3$.   Assume the following:
\begin{enumerate}
       \item [(a)] Condition \eqref{Riccipotential} holds for smooth $f_0$  satisfying:

        \begin{equation}\label{decaycondition}
        |f_0|(x)\le \frac{C_1}{1+\rho_0^{2+\e}(x)}
\end{equation}
   for some $C_1 , \e >0$,  and all $x\in M$ where
    $\rho_0(x)$ is the distance function from a fixed $p\in M$    \item [(b)] The following Sobolev inequality is true:
    \begin{equation}\label{Sobolev}
\lf(\int_M|\phi|^{\frac{2n}{n-1}}dV_0\ri)^{\frac{n-1}{n}}\le
C_2\int_M|\nabla_0 \phi|^2
\end{equation}
 for some $C_2>0$ and all $\phi\in C_0^\infty(M)$.
\item[(c)] There exists constant $C_3>0$ such that
\begin{equation}\label{volumegrowth}
    V_0(r)\le C_3r^{2n}
\end{equation}
for some $C_3 >0$ and all $r$ where $V_0(r)$ is the volume of the geodesic
ball with radius $r$  centered at some $p\in M$.
\end{enumerate}
Then \eqref{KRF} has long time solution $g(t)$ converging
uniformly on compact sets in the $C^\infty$ topology  to a complete
 \K Ricci flat metric $g_{\infty}$ on $M$.
\end{thm}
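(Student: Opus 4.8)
The plan is to reduce the flow \eqref{KRF} to a scalar parabolic complex Monge-\A equation and then run a bootstrap in which the central ingredient is the $C^0$-estimate. Since \eqref{Riccipotential} makes the Ricci form $\p\bar\p$-exact, the \K class is preserved along \eqref{KRF}, so I would write $g(t)=g_0+\ii\p\bar\p\varphi(t)$ and reduce \eqref{KRF} to
\[
\frac{\p\varphi}{\p t}=\log\frac{\det\big((g_0)_{i\jbar}+\varphi_{i\jbar}\big)}{\det\big((g_0)_{i\jbar}\big)}-f_0,\qquad \varphi(x,0)=0,
\]
with the convention that a solution $\varphi$ yields a solution of \eqref{KRF} via $g(t)=g_0+\ii\p\bar\p\varphi(t)$. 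Short-time existence follows from Shi's theorem using the bounded curvature of $g_0$. Differentiating the equation in $t$ shows that $u:=\p\varphi/\p t$ solves the time-dependent heat equation $\p_t u=\Delta_{g(t)}u$ with $u(\cdot,0)=-f_0$, a fact I will exploit repeatedly.

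The main estimate, and the hardest step, is a time-independent $C^0$-bound on $\varphi$. I would run a continuity argument on the maximal interval $[0,T)$ on which $g(t)$ stays uniformly equivalent to $g_0$; on this interval \eqref{Sobolev} holds for $g(t)$ with a uniform constant (note $\tfrac{2n}{n-1}$ is the Sobolev exponent of real dimension $2n$), so a parabolic Moser iteration yields ultracontractive-type bounds $\|u(t)\|_\infty\le C t^{-n/p}\|u(1)\|_{L^p}$ for the evolving heat operator. The decay hypothesis \eqref{decaycondition} together with the volume growth \eqref{volumegrowth} gives $f_0\in L^p(M,dV_0)$ for every $p>\tfrac{2n}{2+\e}$; since $\tfrac{2n}{2+\e}<n$, I may fix $p$ with $\tfrac{2n}{2+\e}<p<n$. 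This produces $\|u(t)\|_\infty\le C t^{-n/p}$ for $t\ge1$ and $\|u(t)\|_\infty\le\|f_0\|_\infty$ for $t\le1$, and because $n/p>1$ the bound $\|\varphi(t)\|_\infty\le\int_0^t\|u(s)\|_\infty\,ds$ converges uniformly in $t$. This is exactly where all three hypotheses conspire: \eqref{Sobolev} drives the ultracontractivity, while \eqref{decaycondition} and \eqref{volumegrowth} place $f_0$ in the range of exponents making $u$ integrable in time. I expect the technical core of the paper to lie here, in justifying the parabolic Moser iteration for the evolving metric and in applying the maximum principle on a complete non-compact manifold.

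With the $C^0$-bound in hand I would close the bootstrap by upgrading to a uniform second-order estimate. Applying Yau's computation to $\log\tr_{g_0}g(t)-A\varphi$ and using the bounded curvature of $g_0$, the maximum principle (in its complete-manifold form, justified by the decay already obtained) gives $C^{-1}g_0\le g(t)\le C g_0$ uniformly in $t$; this forces $T=\infty$, shows the equivalence never degenerates, and closes the loop with the Sobolev step above. A Calabi-type third-order estimate together with Evans-Krylov then yields uniform local $C^{k}$-bounds for all $k$, so by Shi-type interior estimates the flow exists for all time with uniformly bounded geometry.

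Finally I would prove convergence. Since $\|\p\varphi/\p t\|_\infty\to0$ and is integrable in $t$, $\varphi(t)$ converges uniformly to some $\varphi_\infty$ with $g_\infty:=g_0+\ii\p\bar\p\varphi_\infty$ equivalent to $g_0$, hence complete. Passing to the limit in the scalar equation, the steady-state relation $\det\big((g_0)_{i\jbar}+(\varphi_\infty)_{i\jbar}\big)=e^{f_0}\det\big((g_0)_{i\jbar}\big)$ holds, so $g_\infty$ is \K Ricci flat, and the uniform local higher-order bounds give smooth convergence on compact sets, as claimed.
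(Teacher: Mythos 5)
Your overall skeleton --- reduction to the parabolic Monge-Amp\`ere equation \eqref{MA}, a uniform $C^0$-estimate as the central step, then second and higher order estimates and convergence --- matches the paper, but the mechanism you propose for the $C^0$-estimate has a genuine gap. You assert that a parabolic Moser iteration gives the ultracontractive bound $\|\p_t\varphi(t)\|_\infty\le Ct^{-n/p}\|\p_t\varphi(1)\|_{L^p}$ for the evolving operator $\p_t-\Delta_{g(t)}$ with a fixed exponent $p<n$. The obstruction is the evolving volume element: under the flow $\p_t\,dV_t=-R\,dV_t$, so for $f=-\p_t\varphi$ (which solves $\p_t f=\Delta_t f$, with $R=\Delta_t f$) one computes
\begin{equation*}
\frac{d}{dt}\int_M |f|^{p}\,dV_t=-p(p-1)\int_M |f|^{p-2}|\nabla_t f|_t^2\,dV_t
+p\int_M |f|^{p-2}f\,|\nabla_t f|_t^2\,dV_t ,
\end{equation*}
and the second (bad) term can be absorbed into the first only when $p-1>\sup_M|f_0|$. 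Since you need $p<n$ for $t^{-n/p}$ to be integrable in time, and there is no smallness hypothesis on $f_0$, neither the uniform-in-time $L^p$ bound on $f$ that your iteration starts from, nor the iteration steps at the low exponents $q<1+\sup_M|f_0|$, are justified; bounding the volume term instead via a preserved lower bound $R\ge -C$ only yields Gronwall factors $e^{Ct}$, useless for large time. Securing exactly this --- a \emph{time-uniform} $L^{p^*}$ bound on $\p_t\varphi$ for some fixed $2<p^*<n$ --- is the hardest step of the paper (Lemma \ref{uestimates-l1}), and it is proved not by heat-kernel smoothing but by monotonicity of $\int_M f^p\,dV_t$ at large even exponents $p\ge 1+\sup_M|f_0|$, followed by a separate downward iteration in the exponent exploiting $dV_t=e^{f_0-f}dV_0$ and cutoff arguments. (A smaller missing ingredient on your side: finiteness of $\|\p_t\varphi(1)\|_{L^p}$ already requires propagating the spatial decay \eqref{decaycondition} forward in time, which is the paper's Lemma \ref{ut-l1}(iii) via Ecker--Huisken.)

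There is a second, structural problem: your continuity argument for a uniform Sobolev inequality along the flow is circular. The Sobolev constant of $g(t)$ depends on the equivalence constant $\Lambda$ between $g(t)$ and $g_0$; your $C^0$ bound depends on that Sobolev constant; and the equivalence constant recovered from the second-order estimate depends in turn on the $C^0$ bound --- without a smallness assumption there is no reason this loop closes with the same $\Lambda$ one started with. The paper avoids both difficulties at once by never using a Sobolev inequality for $g(t)$: at each fixed time it runs an \emph{elliptic} Moser iteration with respect to the fixed metric $g_0$, using the Monge-Amp\`ere structure $\det((g_0)_{i\jbar}+u_{i\jbar})=e^{f_0-f}\det((g_0)_{i\jbar})$ and Yau's integration-by-parts trick, so that $\int_M|u|^p\,|e^{f_0-f}-1|\,dV_0$ controls $\int_M|\nabla_0 |u|^{(p+1)/2}|_0^2\,dV_0$; hypothesis \eqref{Sobolev} for $g_0$ together with the uniform $L^{p^*}$ bound on $f_0-f$ then drives the iteration to $\sup_M|u|\le C$ (Lemma \ref{uestimates-l2}), after which uniform equivalence of the metrics follows from the a priori estimates of \cite{Chau}. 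I will note that your concluding step is genuinely attractive: time-integrability of $\|\p_t\varphi\|_\infty$ would give full (not subsequential) convergence and make a uniqueness-of-limits lemma unnecessary, whereas the paper must identify the limit of $\p_t\varphi$ as the constant $0$ (using Lemma \ref{uestimates-l1} and infinite volume) and invoke Lemma \ref{unique-lem}; but as it stands that step rests entirely on the unproven decay rate.
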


 We will also consider the following
parabolic Monge Amp\`ere equation corresponding to (\ref{KRF}):

\begin{equation}\label{MA}
\left\{%
\begin{array}{ll}
    \dfrac{\p u}{\p t}= \log \dfrac{\det ((g_0)_{k \bar{l}}+ u_{k
\bar{l}})}{\det ((g_0)_{k \bar{l}})}-f_0
      \\
    u(x,0)=0 \\
\end{array}%
\right.
\end{equation}
The relationship between the two equations can be described simply as follows.  If \eqref{MA}  has a smooth solution $u$ on $M\times [0,T)$ then $g_\ijb=(g_0)_\ijb+u_\ijb$ is a smooth solution to \eqref{KRF}. Conversely, if \eqref{KRF} has a solution $ g$ then

$$
u(x,t)=\int_0^t\log\frac{\det( g_\ijb)}{\det( (g_0)_\ijb)}(x,s)ds-tf_0(x)
$$
is a solution to \eqref{MA} as shown in \cite{Chau}.

\begin{lem}\label{ut-l1} Let $(M^n,g_0)$ be a complete non-compact
\K manifold with bounded curvature such that \eqref{Riccipotential} holds for a smooth bounded potential $f_0$. Suppose $g(t)$
is a smooth solution to \eqref{KRF} on $M\times[0,T)$ such that $g(t)$ has bounded curvature for each $0<t<T$.
 Then if $u(x,t)$ is the corresponding solution to \eqref{MA}, then $f(t)=-u_t$ satisfies the following for each $0<t<T$:
\begin{enumerate}
    \item [(i)] $ |f|$, $t|\nabla_t f|^2$ and $t\Delta_t f$ are all
        uniformly bounded on $M$ by a constant
        depending only on $f_0$ and  $g_0$.
    \item [(ii)] All the covariant derivatives of
        $f(t)$ relative to $g(t)$ are bounded on $M$.
\item[(iii)] If in addition,  $|f_0(x)|\le C/(1+\rho_0(x))^N$ for some
    $N>1$ and distance function $\rho_0$ relative some $p\in M$, then there is a constant $C_t$ such
    that $|f(x,t)|\le C_t/(1+\rho_t(x))^N$ where $\rho_t(x)$ is the
    distance relative to $p$ with respect to $g(t)$.
\end{enumerate}
\begin{proof} Fix some $t_0\in (0, T)$.  We begin by noting that by (\ref{KRF}) and the uniform curvature bounds, $g(t)$ is uniformly equivalent to $g_0$ on $M\times [0,t_0]$.  Moreover, by \cite{Sh2} all covariant derivatives of $g(t)$ are bounded on $M\times (0, t_0]$.  In particular, it follows that $(M, g(t))$ has bounded geometry of all orders for each $t\in (0, t_0]$.

Now differentiating (\ref{MA}) gives

\begin{equation}\label{ut-e1}
   \lf(\frac{\p}{\p t}-\Delta_t \ri) f=0.
\end{equation}
As $f_0$ is bounded, it follows from the above bounds on $g(t)$ (and curvature) and the maximum principle in \cite[Lemma 4.7]{Sh2} that $\sup_M |f(t)| \le \sup_M |f_0|$ for all $t\in [0, t_0]$.  From this, the fact that  $\Delta_t f(t)= R(t)$, the bounded geometry of $g(t)$ mentioned above and standard elliptic theory, we see that (ii) is true since $t_0\in (0, T)$ was arbitrarily chosen.  Part (i) now follows from the uniform bound on $|f(t)|$, the bounded geometry of $g(t)$ mentioned above, the main Theorem in \cite{Ch} and by letting $t_0 \to T$.

We now prove (iii).  By Lemma  4.5 in \cite{Sh2} there exists  a
smooth exhaustion function $\rho_0$ such that for some positive
constants $K_1, K_2, K_3$, depending only on $n$, $t_0$ and the uniform bound on $Rm(x, t)$ on $M\times [0, t_0]$, satisfying
\begin{enumerate}
\item $K_1 (r_t+1) \leq \rho_0 \leq K_2 (r_t+1)$ \item
    $|\nabla_t \rho_0|, |\nabla^2_t \rho_0| \leq K_3$.
\end{enumerate}
for all $t\in [0, t_0]$.  Direct computation gives
\begin{equation}\label{s1e3}
\begin{split}
\frac{d}{dt} (\rho_0 ^{2N} f^2)&\le  \rho_0 ^{2N} \Delta_t f^2\\
&=\Delta_t(\rho_0 ^{2N} f^2)-(\Delta_t \rho_0 ^{2N})f^2-2
\langle\nabla_t\rho_0 ^{2N}, \nabla_t f^2\rangle_t
 \\
&=\Delta_t( \rho_0 ^{2N}f^2)-2N \rho_0^{2N-1}
(\Delta_t\rho_0) f^2 \\
&\quad-2N(2N-1)\rho_0^{2N-2} |\nabla_t
\rho_0|_t^2 f^2
 -4N\rho_0 ^{2N-1} \langle\nabla_t \rho_0, \nabla_t
f^2\rangle_t
\\&\le \Delta_t(\rho_0^{2N} f^2)+C_1\rho_0 ^{2N} f^2\\
&\quad-4N  \rho_0 ^{2N-1}
\langle\nabla_t \rho_0, (\frac{\nabla_t(\rho_0^{2N} f^2)-\nabla_t
(\rho_0^{2N})f^2}{\rho_0^{2N}})\rangle_t\\
&\le \Delta_t(\rho_0^{2N} f^2)+C_2\rho_0^{2N} f^2-4N \rho_0
^{-1}
\langle \nabla_t \rho_0,  \nabla_t (\rho_0^{2N}f^2 )\rangle_t\\
\end{split}
\end{equation}
for constants $C_1 , C_2>0$ depending only on $n, N, t_0$
and the  bound on $Rm(x, t)$ on $M\times [0, t_0]$.

On the other hand, for any $t\in [0,t_0]$ and for any $R>0$, let
$\varphi$ be a cut off function on $B_0(R)$ which is zero outside
$B_0(2R)$, and $|\nabla_0\varphi|_0\le C/R$ for some constant $C$
independent of $t_0$ and $R$. Then
\begin{equation}\label{s1e3-1}
   \begin{split}
   \int_{M}\frac{\p}{\p t}(\varphi^2f^2)dV_t&=
   \int_{M}\varphi^2\lf(\Delta_t f^2 dV_t -2|\nabla_t f|_t^2\ri)dV_t\\
   &\le2\int_{M}|\nabla_t\varphi|_t^2f^2dV_t+2\int_{M} \phi \Delta_t\phi f^2dV_t\\
   &\quad -\int_{M}\varphi^2|\nabla_t f|_t^2dV_t\\
   \end{split}
\end{equation}
Integrating from $0$ to $t_0$ and using the uniform bound on $Sup_M |f(t)|$, we have
\begin{equation}\label{s1e3-2}
  \int_0^{t_0} \int_{B_0(R)}\varphi^2|\nabla_t f|_t^2dV_t dt\le
  C_3V_t(B_0(R))\le C_4 \exp(aR^2)
\end{equation}
for some constants $C_3$, $C_4$ depending only on the  bound on $Rm(x, t)$ on $M\times [0, t_0]$, $n$ and $t_0$.  In
particular, we can find $b>0$ such that
\begin{equation}\label{s1e3-3}
    \int_0^{t_0} \int_M \exp(-b\rho_t^2)|\nabla_t (\rho_0^{2N}f^2)|_t^2dV_t
    dt<\infty.
\end{equation}
Let $h=\exp(-C_2t)\rho_0^{2N}f^2$, then by (\ref{s1e3})
\begin{equation}\label{s1e3-4}
\frac{\p}{\p t}h\le \Delta_t h-4N \rho_0 ^{-1} \langle \nabla_t
\rho_0,  \nabla_t h\rangle_t.
\end{equation}
Since at $t=0$, $h$ is bounded, if we let $h_1=h-\sup_{M}h(0)$,
then $h_1$ still satisfies (\ref{s1e3-4}) and $h_1(0)\le 0$. By
Theorem 4.3 in \cite{EH}, together with \eqref{s1e3-3},
\eqref{s1e3-4}, and the properties of $\rho_0$, we conclude that
$h(t)\le 0$ for all $t\in [0, t_0]$.  Part (iii) of the lemma now follows by letting $t_0 \to T$.   Note that we have used
the fact that $g $ and $g(t)$ are uniformly equivalent.

This completes the proof of the lemma.
\end{proof}
\end{lem}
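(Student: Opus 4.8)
The starting point is the observation that $f=-u_t$ solves a linear heat equation. Differentiating \eqref{MA} in $t$ and using $\frac{\p}{\p t}\log\det(g_{k\bar l})=g^{k\bar l}\p_t u_{k\bar l}=\Delta_t u_t$ gives $(\frac{\p}{\p t}-\Delta_t)f=0$, so the whole lemma reduces to bounding a solution of the heat equation along the flow. Before anything else I would record the two structural facts that make maximum-principle arguments work in the noncompact setting: first, integrating \eqref{KRF} and using the uniform curvature bound shows $g(t)$ is uniformly equivalent to $g_0$ on any slab $M\times[0,t_0]$ with $t_0<T$; second, Shi's local derivative estimates bound all covariant derivatives of $g(t)$ on $M\times(0,t_0]$, so $(M,g(t))$ has bounded geometry of all orders there. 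I would prove everything on a fixed slab $[0,t_0]$ and then let $t_0\to T$.

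For the $C^0$ bound in (i) I would apply a maximum principle for the heat equation on a complete manifold of bounded geometry (as in \cite[Lemma 4.7]{Sh2}) to $f$, with $f(0)=f_0$ bounded, yielding $\sup_M|f(t)|\le\sup_M|f_0|$ on $[0,t_0]$. For the bounds on $t|\nabla_t f|^2$ and $t\Delta_t f$ I would invoke a parabolic gradient estimate for heat solutions under bounded geometry (the main theorem of \cite{Ch}), upgrading the uniform $C^0$ bound to the claimed time-weighted first- and second-order bounds. Part (ii) then follows almost immediately: since $\Delta_t f=R(t)$ and $g(t)$ has bounded geometry of all orders, interior elliptic estimates on unit balls bootstrap the $C^0$ bound on $f$ to uniform bounds on all its covariant derivatives.

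The real content is the spatial decay in (iii), and this is where I expect the work to lie. The plan is a weighted maximum principle. I would first replace the distance function $r_t$ of $g(t)$ by a smooth exhaustion $\rho_0$ comparable to $r_t+1$ with $|\nabla_t\rho_0|,|\nabla_t^2\rho_0|$ bounded (available from \cite[Lemma 4.5]{Sh2}), and study the quantity $\rho_0^{2N}f^2$. Using $(\frac{\p}{\p t}-\Delta_t)f=0$ and expanding $\Delta_t(\rho_0^{2N}f^2)$, the gradient and Hessian bounds on $\rho_0$ let me absorb the error terms into a differential inequality of the form $\frac{\p}{\p t}(\rho_0^{2N}f^2)\le\Delta_t(\rho_0^{2N}f^2)+C\rho_0^{2N}f^2-4N\rho_0^{-1}\langle\nabla_t\rho_0,\nabla_t(\rho_0^{2N}f^2)\rangle_t$. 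Setting $h=e^{-Ct}\rho_0^{2N}f^2$ converts this to a clean subsolution inequality $\p_t h\le\Delta_t h-4N\rho_0^{-1}\langle\nabla_t\rho_0,\nabla_t h\rangle_t$, and the hypothesis $|f_0|\le C(1+\rho_0)^{-N}$ makes $h(0)$ bounded.

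To pass from the subsolution inequality to $h\le\sup_M h(0)$ I would apply a maximum principle valid on complete noncompact manifolds with a first-order drift term, such as \cite[Theorem 4.3]{EH}. The hypothesis to check there is an integrability condition of the shape $\int_0^{t_0}\int_M e^{-b\rho_t^2}|\nabla_t(\rho_0^{2N}f^2)|_t^2\,dV_t\,dt<\infty$, and securing this is the main obstacle. I would obtain it from an energy estimate: testing $\p_t f^2=\Delta_t f^2-2|\nabla_t f|_t^2$ against a spatial cutoff $\varphi^2$ and integrating in $t$ controls $\int_0^{t_0}\int\varphi^2|\nabla_t f|_t^2\,dV_t\,dt$ by $\sup_M|f|^2$ times $V_t(B_0(2R))$, which grows at most like $e^{aR^2}$ by bounded curvature; choosing $b$ large enough renders the weighted integral finite. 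Granting this, \cite[Theorem 4.3]{EH} applied to $h_1=h-\sup_M h(0)$ forces $h\le0$ on $[0,t_0]$, i.e.\ $\rho_0^{2N}f^2\le e^{Ct}\sup_M h(0)$; converting $\rho_0$ back to $\rho_t$ via its comparability with $r_t$ gives exactly the decay in (iii). Letting $t_0\to T$ then completes all three parts.
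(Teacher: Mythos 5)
Your proposal follows essentially the same route as the paper's own proof: the heat equation \eqref{ut-e1} for $f$, Shi's estimates for bounded geometry and the maximum principle for part (i) together with Chow's gradient estimate, elliptic bootstrapping via $\Delta_t f = R(t)$ for part (ii), and for part (iii) the weighted quantity $\rho_0^{2N}f^2$, the drift subsolution inequality for $h=e^{-Ct}\rho_0^{2N}f^2$, the cutoff energy estimate securing the Ecker--Huisken integrability hypothesis, and \cite[Theorem 4.3]{EH}. The argument is correct and matches the paper step for step.
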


\begin{cor}\label{longtime-c1} Let $(M^n, g_0)$ be a complete
non-compact \K manifold with bounded curvature such that \eqref{Riccipotential} is satisfied for a smooth bounded potential $f_0$. Then \eqref{KRF}
has a long time smooth solution.
\end{cor}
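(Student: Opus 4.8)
The plan is to argue by the standard continuation criterion for the flow. By the short time existence theorem for the \KRF with bounded curvature initial data in \cite{Sh2}, there is a maximal $T\in(0,\infty]$ such that \eqref{KRF} admits a smooth solution $g(t)$ on $M\times[0,T)$ having bounded curvature on each compact subinterval; it suffices to prove $T=\infty$, so suppose instead $T<\infty$. Then the hypotheses of Lemma~\ref{ut-l1} hold on $[0,T)$, and I would first extract the zeroth order information it provides: since $f=-u_t$ is uniformly bounded by a constant depending only on $f_0$ and $g_0$, the corresponding solution $u$ of \eqref{MA} satisfies $|u|\le (\sup_M|f|)\,T$ and is therefore uniformly bounded on $M\times[0,T)$; moreover by \eqref{MA} the determinant ratio $\log\big(\det(g_\ijb)/\det((g_0)_\ijb)\big)=f_0-f$ is uniformly bounded as well.

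The \emph{main step} is a uniform parabolic second order estimate: I would show there is $\Lambda\ge1$ with $\Lambda^{-1}g_0\le g(t)\le \Lambda g_0$ on $M\times[0,T)$. Following the Aubin--Yau--Cao computation one evolves $\log\tr_{g_0}g-Au$ and finds, using the lower bound on the bisectional curvature of $g_0$ furnished by the curvature bound, that for $A$ large $(\partial_t-\Delta_t)\big(\log\tr_{g_0}g-Au\big)\le C-(A-C')\,\tr_{g}g_0$. Since $g(t)$ has bounded geometry of all orders for $t>0$ by \cite{Sh2}, the maximum principle for complete noncompact manifolds used in the proof of Lemma~\ref{ut-l1} applies and yields an upper bound on $\tr_{g}g_0$, which together with the two sided bound on the determinant ratio gives an upper bound on $\tr_{g_0}g$ and hence the asserted equivalence. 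I expect this to be the principal obstacle: the bound on $u_\ijb$ does \emph{not} follow from Lemma~\ref{ut-l1}, which controls $f=-u_t$ and its spatial derivatives rather than $u_\ijb$ itself, and one must still justify the maximum principle in the noncompact setting.

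With the uniform $C^0$ bound on $u$ and the uniform equivalence of metrics, the equation \eqref{MA} is uniformly parabolic with uniformly bounded data on each unit ball, so I would invoke the interior regularity theory for the parabolic complex Monge--Amp\`ere equation (an Evans--Krylov estimate to reach $C^{2,\a}$, then Schauder bootstrapping after differentiating) to obtain uniform $C^k_{loc}$ bounds on $u$ on $[0,T)$; the bounded geometry of $g_0$, respectively of $g(t)$ for $t>0$, makes the local constants uniform over $M$. In particular one gets a uniform bound $\sup_{M\times[0,T)}|Rm|_{g(t)}\le K<\infty$ on the full curvature tensor, since the curvature of $g=g_0+\p\bar\p u$ is controlled by that of $g_0$ together with four spatial derivatives of $u$.

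Finally, this uniform curvature bound contradicts the maximality of $T$. From the uniform higher order bounds the metrics $g(t)$ converge smoothly on compact sets as $t\to T$ to a complete \K metric $g(T)=g_0+\p\bar\p u(T)$ which is equivalent to $g_0$ and has bounded curvature, so by the short time existence theorem of \cite{Sh2} the flow restarts at $g(T)$ and continues on some $[T,T+\tau)$, extending the solution past $T$; equivalently, since the existence time in \cite{Sh2} depends only on $n$ and the curvature bound, one may restart at a time close to $T$ and extend beyond $T$. Hence $T=\infty$, which proves the corollary.
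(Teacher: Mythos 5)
Your proposal is correct and follows essentially the same route as the paper: the decisive input in both is Lemma~\ref{ut-l1}, which gives $|f|=|u_t|\le\sup_M|f_0|$ and hence the bound $|u|\le t\sup_M|f_0|$, after which long time existence follows from the a priori estimates for the parabolic Monge--Amp\`ere equation. The only difference is that the paper simply cites \cite{Chau} for those estimates and the continuation argument, whereas you reconstruct them explicitly (the Aubin--Yau--Cao second order estimate on $\log\tr_{g_0}g-Au$, the Evans--Krylov/Schauder bootstrap, and the restart via \cite{Sh2}).
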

\begin{proof} By \cite{Sh2}, \eqref{KRF} has a solution on $M\times[0,T)$ where $T>0$ is the maximal time satisfying the assumptions in  Lemma \ref{ut-l1}. Let $u$ and $f$
 as in the lemma. Then $|f(t)|\leq sup_M|f_0|$ and hence $|u(x,t)|\le t
 \sup_M|f_0|$ for all $(x, t) \in M\times[0, T)$ by  Lemma \ref{ut-l1}.  Hence $\sup_M u(x, t)$ cannot blowup in finite
 time.  By the a prior estimates in \cite{Chau}, we
 conclude that $T=\infty$ and thus \eqref{KRF} has a long time smooth solution.
 \end{proof}
Note the solution $g(t)$ in the corollary satisfies the assumptions
in Lemma \ref{ut-l1} for any finite $t>0$, and hence we may assume in Theorem \ref{mainthm} that
$f_0$ has uniformly bounded covariant derivatives of all orders.

With the assumptions as in Theorem \ref{mainthm}, let $g(t)$ be
the long time solution as in Corollary \ref{longtime-c1} and let
$u(x, t)$ and $f(x, t)$ be defined as before. In order to prove Theorem \ref{mainthm}, we
need the following lemmas.

\begin{lem}\label{uestimates-l1} Let $(M,g_0)$ be a complete non-compact \K manifold with bounded curvature satisfying conditions (a) and (c) in
Theorem \ref{mainthm} with $n\ge 3$ and let $g(t)$ be the longtime
solution of the \K Ricci flow \eqref{KRF}. For any $p_0>n$
with $(p_0+1)(2+\e)/(n+p_0)>2$ and $p^*=n(p_0+1)/(n+p_0)>2$, there
is a positive constant $C$ independent of $t$ such that
$$
\int_M|f(t)|^{p^*}dV_t\le C
$$
for all $t$.
\end{lem}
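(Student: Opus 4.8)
The plan is to run weighted $L^p$ energy estimates along the flow, exploiting that $f$ solves the heat equation \eqref{ut-e1} while the volume form evolves by $\partial_t\,dV_t=-(\Delta_t f)\,dV_t$ (since $\Delta_t f=R(t)$ is the scalar curvature). First I would record the two inputs from Lemma \ref{ut-l1} that drive everything: the uniform bound $|f(t)|\le A:=\sup_M|f_0|$, and the spatial decay of $f(t)$ together with its covariant derivatives at each fixed time, which makes the integrations by parts below legitimate and the integrals finite for each $t$. Combining \eqref{ut-e1} with the evolution of $dV_t$ and integrating by parts, for every $p\ge 2$ one gets the clean identity
\[
\frac{d}{dt}\int_M|f|^p\,dV_t=-p\int_M|f|^{p-2}(p-1-f)\,|\nabla_t f|_t^2\,dV_t .
\]

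The difficulty is that the sign of the integrand is governed by $p-1-f$, so this quantity is monotone only when $p-1\ge\sup_M f$, i.e. for $p$ large relative to $A$; for the prescribed (possibly small) exponent $p^*$ it need not be monotone. The key idea is to tame the small exponent by a large auxiliary one. Fix $p$ large enough that $p>\max\{2,\,1+A,\,2n/(2+\e)\}$. For such $p$ the identity gives $\frac{d}{dt}\int_M|f|^p\,dV_t\le -p(p-1-A)\int_M|f|^{p-2}|\nabla_t f|_t^2\,dV_t\le 0$, so $\int_M|f|^p\,dV_t$ is non-increasing, and integrating in time yields the crucial global energy bound
\[
\int_0^\infty\!\!\int_M|f|^{p-2}\,|\nabla_t f|_t^2\,dV_t\,dt\le\frac{1}{p(p-1-A)}\int_M|f_0|^p\,dV_0<\infty ,
\]
where finiteness of the right-hand side uses the decay \eqref{decaycondition} together with the volume growth \eqref{volumegrowth}, which force $\int_M(1+\rho_0)^{-p(2+\e)}\,dV_0<\infty$ once $p(2+\e)>2n$.

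Now I would return to the exponent $p^*$. Splitting the identity with $p=p^*$ over $\{f\le p^*-1\}$, where the integrand is nonnegative and hence gives a favourable sign, and over $\{f>p^*-1\}$, where $0\le |f|\le A$, one obtains
\[
\frac{d}{dt}\int_M|f|^{p^*}\,dV_t\le p^*A^{p^*-1}\int_{\{f>p^*-1\}}|\nabla_t f|_t^2\,dV_t .
\]
On $\{f>p^*-1\}$ we have $|f|^{-(p-2)}\le (p^*-1)^{-(p-2)}$ (here $p^*>2$ and $p>2$ are used), so the right-hand side is at most $p^*A^{p^*-1}(p^*-1)^{-(p-2)}\int_M|f|^{p-2}|\nabla_t f|_t^2\,dV_t$. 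Integrating in $t$ and inserting the global energy bound gives, for every $t$,
\[
\int_M|f(t)|^{p^*}\,dV_t\le\int_M|f_0|^{p^*}\,dV_0+\frac{p^*A^{p^*-1}}{(p^*-1)^{p-2}}\cdot\frac{1}{p(p-1-A)}\int_M|f_0|^p\,dV_0=:C ,
\]
a constant independent of $t$. Here $\int_M|f_0|^{p^*}\,dV_0<\infty$ is exactly where the hypothesis $(p_0+1)(2+\e)/(n+p_0)>2$ enters, since it is precisely the statement $p^*(2+\e)>2n$, the borderline exponent for integrating $(1+\rho_0)^{-p^*(2+\e)}$ against \eqref{volumegrowth}.

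I expect the main obstacle to be exactly this failure of monotonicity for the small exponent $p^*$; the decisive device is the two-exponent argument, which controls the bad region $\{f>p^*-1\}$, where $|f|$ is bounded below, by the large-$p$ space–time energy. The remaining points are routine: justifying the differentiation under the integral sign and the integrations by parts via the decay and derivative estimates of Lemma \ref{ut-l1}, and checking the behaviour as $t\to 0$, where the relevant integrals are controlled by the total (bounded) decrease of $\int_M|f|^p\,dV_t$.
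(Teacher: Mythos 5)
Your argument is sound in outline and reaches the bound by a genuinely different mechanism from the paper's. Both proofs begin with the same large-exponent monotonicity computation (the paper's \eqref{s1e4}), but they diverge afterwards. The paper discards the dissipation term and instead derives, from the explicit relation $dV_t=e^{f_0-f}dV_0$ and a power-series expansion of $e^{-f}$, a descent inequality $\int_M|f|^{q}\,dV_t\le C+C'\int_M|f|^{q+1}\,dV_t$ valid for all $q\ge p^*$ (handled separately for $v=\max\{f,0\}$ and $w=\max\{-f,0\}$), and then iterates this downward from a large even exponent to $p^*$. You instead keep the dissipation: the large-$p$ monotonicity hands you the spacetime energy $\int_0^\infty\int_M|f|^{p-2}|\nabla_t f|_t^2\,dV_t\,dt<\infty$ for free, and you use it to control the only region where the $p^*$-level identity has the wrong sign, namely $\{f>p^*-1\}$, where $|f|$ is bounded below and hence $|\nabla_t f|_t^2\le(p^*-1)^{-(p-2)}|f|^{p-2}|\nabla_t f|_t^2$. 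This is a cleaner one-step passage from $p$ to $p^*$ that avoids the power-series manipulation and the exponent iteration, and it isolates exactly where the hypothesis $(p_0+1)(2+\e)/(n+p_0)>2$, i.e.\ $p^*(2+\e)>2n$, enters. One caution: the integration by parts at the borderline exponent $p^*$ is not as routine as you suggest. The crude boundary estimate for the cross term, $\rho^{-1}\int_{B(2\rho)\setminus B(\rho)}|f|^{p^*-1}|\nabla_t f|_t$, has order $\rho^{\,2n-1-(p^*-1)(2+\e)}$, and $(p^*-1)(2+\e)>2n-1$ does \emph{not} follow from $p^*(2+\e)>2n$; for the same reason the individual integrals in your $p^*$-identity need not be finite a priori. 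The repair is standard: run the computation against a cutoff $\varphi$, absorb the cross term by Cauchy--Schwarz into the coercive piece $-p^*(p^*-1)\int\varphi^2|f|^{p^*-2}|\nabla_t f|_t^2\,dV_t$ (which shifts your threshold from $p^*-1$ to, say, $(p^*-1)/2$, harmlessly), and note that the surviving error $\int_M|f|^{p^*}|\nabla_t\varphi|_t^2\,dV_t$ vanishes as the cutoff radius tends to infinity precisely because $p^*(2+\e)>2n>2n-2$ --- the same device the paper uses in its own descent step. With that adjustment your proof is complete.
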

\begin{lem}\label{uestimates-l2} Let $(M,g_0)$ be as in
Theorem \ref{mainthm}  and let $g(t)$ be the longtime solution of
the K\"ahler-Ricci flow \eqref{KRF}. Then there is a  positive constant
$C$   such that
 and
$$
\sup_{M\times[0,\infty)}|u|\le C.
$$
for all $t$.
\end{lem}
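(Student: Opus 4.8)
The plan is to prove the estimate at each fixed $t$, with every constant independent of $t$, by a Moser iteration for the elliptic operator of $g_0$. Write $\omega_0,\omega=\omega(t)$ for the \K forms of $g_0,g(t)$ and $dV_0,dV_t$ for the volume forms. From \eqref{MA} and $f=-u_t$ we have $dV_t=e^{f_0-f}dV_0$, so $\omega^n-\omega_0^n$ is a fixed multiple of $(e^{f_0-f}-1)\,dV_0$. By Lemma \ref{ut-l1}(i), $|f|,|f_0|\le\sup_M|f_0|$ uniformly in $t$, hence $dV_t,dV_0$ are uniformly equivalent and $|e^{f_0-f}-1|\le C(|f|+|f_0|)$. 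Combining Lemma \ref{uestimates-l1} with this equivalence gives $\|f\|_{L^{p^*}(dV_0)}\le C$ uniformly; since $|f|\le\sup_M|f_0|$, this self-improves to $\|f\|_{L^{q}(dV_0)}\le C_q$ for \emph{every} $q\ge p^*$ (write $|f|^q=|f|^{p^*}|f|^{q-p^*}$), and the decay \eqref{decaycondition} together with the volume growth \eqref{volumegrowth} gives $\|f_0\|_{L^q(dV_0)}\le C_q$ likewise. Thus $\Psi:=|f|+|f_0|$ satisfies $\|\Psi\|_{L^q(dV_0)}\le C_q$ uniformly in $t$ for every $q\ge p^*$, and I am free to fix some $q>n$ for the iteration.

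The engine is the identity $\omega^n-\omega_0^n=\ii\,\p\ol\p u\wedge T$ with $T=\sum_{i=0}^{n-1}\omega^i\wedge\omega_0^{\,n-1-i}$ a closed nonnegative $(n-1,n-1)$-form. Fixing $p\ge1$, multiplying by $\mathrm{sgn}(u)|u|^{p}$ and integrating against the cutoffs from the proof of Lemma \ref{ut-l1}(iii)—whose boundary terms vanish thanks to the decay of $u$ at fixed $t$ inherited from that of $f$—and retaining only the $\omega_0^{\,n-1}$ summand of $T$ gives
\be
c_n\,p\int_M|u|^{p-1}|\nabla_0 u|_0^2\,dV_0\le\int_M|u|^{p}\,|e^{f_0-f}-1|\,dV_0\le C\int_M|u|^{p}\Psi\,dV_0 .
\ee
Setting $w=|u|^{(p+1)/2}$ and applying the Sobolev inequality \eqref{Sobolev} (taken with respect to $g_0$) to $w$, with $\beta:=\tfrac{n}{n-1}$, yields the master estimate
\be
\lf(\int_M w^{2\beta}\,dV_0\ri)^{1/\beta}\le\frac{C(p+1)^2}{p}\int_M w^{\frac{2p}{p+1}}\Psi\,dV_0 .
\ee

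I would then split the right-hand side by \H's inequality in two different ways. For the base case, put $w$ itself into the Sobolev exponent: \H\ with conjugate exponents $\bigl(\tfrac{\beta(p+1)}{p},\,q_p\bigr)$, where $q_p=\tfrac{\beta(p+1)}{\beta(p+1)-p}$, gives $\int w^{2p/(p+1)}\Psi\le\|w\|_{L^{2\beta}}^{2p/(p+1)}\|\Psi\|_{L^{q_p}}$. Since the exponent $\tfrac{2p}{p+1}$ of $w$ is strictly below the Sobolev exponent $2$, the (finite, by decay) $w$-factor can be absorbed, leaving the \emph{inhomogeneous} bound
\be
\|u\|_{L^{\beta(p+1)}(dV_0)}\le\frac{C(p+1)^2}{p}\,\|\Psi\|_{L^{q_p}(dV_0)} .
\ee
As $p$ runs over $[1,\infty)$ the dual exponent $q_p$ increases from $\tfrac{2n}{n+1}$ up to $n$; because $p^*<n$ I may fix one $p_1$ with $q_{p_1}\in[p^*,n)$, for which $\|\Psi\|_{L^{q_{p_1}}}\le C$ uniformly, obtaining $\|u(t)\|_{L^{s_0}(dV_0)}\le C$ with $s_0=\beta(p_1+1)$, uniformly in $t$. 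For the iteration I instead use the fixed $q>n$, whose conjugate $q'<\beta$; \H\ turns the master estimate into $\|u\|_{L^{\beta(p+1)}}^{\,p+1}\le \tfrac{C(p+1)^2}{p}\,\|\Psi\|_{L^{q}}\,\|u\|_{L^{pq'}}^{\,p}$. Since $q'<\beta$, the exponents along $s_{k+1}=\beta(s_k/q'+1)$ grow geometrically to $\infty$ and the accumulated constant $\prod_k\bigl(C(p_k+1)^2/p_k\bigr)^{1/(p_k+1)}$ converges; starting from $s_0$ this gives $\sup_M|u(t)|\le C\,\|u(t)\|_{L^{s_0}(dV_0)}^{\theta}\le C$, uniformly in $t$, whence $\sup_{M\times[0,\infty)}|u|\le C$.

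The one genuinely delicate point is the base step on a space of infinite volume, where the compact normalization $\int_M u\,dV_0=0$ is unavailable and a naive $p=1$ energy estimate would demand $\Psi\in L^{2n/(n+1)}$, an exponent too small for the decay in \eqref{decaycondition}. What rescues it is precisely that $\Psi$ enters multiplied by $|u|^p$, so the inequality is inhomogeneous and self-limiting; raising $p_1$ to pull the dual exponent $q_{p_1}$ into the admissible range $[p^*,n)$ then converts the uniform $L^{p^*}$ bound on $f$ into a uniform $L^{s_0}$ bound on $u$ with no prior control on $u$. The companion observation—that boundedness of $f$ upgrades $f\in L^{p^*}$ to $f\in L^q$ for a fixed $q>n$—is what lets the weak Sobolev gain $\beta=\tfrac{n}{n-1}$ beat $q'$ and drive the exponents to infinity despite $p^*<n$.
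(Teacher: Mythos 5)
Your proposal is correct and follows essentially the same route as the paper: the same Tian--Yau/Cao integration-by-parts identity $\omega^n-\omega_0^n=(e^{f_0-f}-1)\omega_0^n$ tested against $\mathrm{sgn}(u)|u|^p$, the same Sobolev inequality, the same base case (H\"older pairing $|u|^p$ against $\Psi$ with dual exponent in $[p^*,n)$ and absorbing the $u$-factor, which is exactly the paper's \eqref{iteration2}--\eqref{iteration3} with $p=p_0$), and a Moser iteration driven by the uniform $L^{p^*}$ bound on $f$ from Lemma \ref{uestimates-l1}. The only deviation is cosmetic: in the iteration step you H\"older against a fixed $\|\Psi\|_{L^q}$ with $q>n$ (so the exponent ratio is $\b/q'>1$), whereas the paper uses the conjugate pair $(\tfrac{p+1}{p},\,p+1)$ so the exponent multiplies by exactly $\kappa=n/(n-1)$ each step; both iterations close for the same reason.
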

Assuming the lemmas are true. Then we can prove Theorem
\ref{mainthm} as follows:

\begin{proof}[Proof of Theorem \ref{mainthm}] By Lemmas \ref{uestimates-l1} and \ref{uestimates-l2} and the a priori estimates in \cite{Chau}, we conclude that for all $t\geq0$, $g(t)$ is uniformly
equivalent to $g_0$ independent of $t$.  Moreover, for any sequence
$t_k\to\infty$ some subsequence of $u(x,t_k)$ (which we still denote by  $u(x,t_k)$)  converges in the  $C^\infty$ sense on
compact subsets of $M$ to a smooth limit $v$ on $M$. Thus by Lemma
\ref{ut-l1} (i),  we
conclude that  $\frac{\p u}{\p t}(t_k)=-f(t_k)$ converges uniformly on $M$ to
a constant $c$.  Lemma \ref{uestimates-l1} and the fact
that $M$ has infinite volume implies that $c$ must be zero.  Hence $g_\ijb+v_\ijb$ is a smooth complete Ricci flat K\"ahler
metric on $M$.  The proof of the theorem
is complete once we show that the limit metric $g_\ijb+v_\ijb$ is independent of
the $t_k$.   This last fact follows by the next lemma.
\end{proof}

The following result was basically proved in \cite{Chau05}.

\begin{lem}\label{unique-lem} Let  $g$ and $h$ be two equivalent complete  K\"ahler metrics on a non-compact complex manifold $M$
 such that: (i)
 $h_\ijb=g_\ijb+v_\ijb$ for some smooth bounded function $v$; (ii) $g$ has nonnegative
  Ricci curvature; and (iii)
  $$
  \log \frac{\det (g_\ijb+v_\ijb )}{\det (g_\ijb)}=0.
  $$
  Then $g=h$.
\end{lem}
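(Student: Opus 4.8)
The plan is to reinterpret hypothesis (iii) as the homogeneous complex Monge--Amp\`ere equation $\det(h_\ijb)=\det(g_\ijb)$, so that $g$ and $h$ share the same volume form, $dV_g=dV_h$, and the same Ricci form, $\Ric(h)=\Ric(g)\ge 0$ (since $\Ric=-\partial\bar\partial\log\det$). Writing $\lambda_1,\dots,\lambda_n>0$ for the eigenvalues of $h$ relative to $g$, condition (iii) gives $\prod_i\lambda_i=1$, whence the arithmetic--geometric mean inequality yields $\Delta_g v=\tr_g h-n=\sum_i(\lambda_i-1)\ge 0$ and, symmetrically, $\Delta_h v\le 0$. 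Thus $v$ is a bounded $g$-subharmonic function. By itself this is not enough, since a complete nonnegatively Ricci-curved manifold can carry nonconstant bounded subharmonic functions; the point will be to exploit the full determinant equation and not merely the sign of $\Delta_g v$.

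To do that I would linearize along the segment of K\"ahler metrics $(g_s)_\ijb:=g_\ijb+s\,v_\ijb$, $s\in[0,1]$, each positive definite and uniformly equivalent to $g$ because $g$ and $h$ are. From $\frac{d}{ds}\det g_s=\det g_s\cdot g_s^{i\bar\jmath}v_\ijb$ and (iii) one gets, after integrating in $s$,
\[
b^{i\bar\jmath}v_\ijb=\int_0^1\det(g_s)\,g_s^{i\bar\jmath}v_\ijb\,ds=\det h-\det g=0,\qquad b^{i\bar\jmath}:=\int_0^1\det(g_s)\,g_s^{i\bar\jmath}\,ds.
\]
Now I would use that $g$ is K\"ahler: in a local holomorphic chart $g_\ijb=\partial_i\partial_{\bar\jmath}\Phi$, so each $(g_s)_\ijb=\partial_i\partial_{\bar\jmath}(\Phi+sv)$ is a complex Hessian, and the cofactor of a complex Hessian is divergence free, $\partial_i\big(\det(g_s)\,g_s^{i\bar\jmath}\big)=0$. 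Averaging gives $\partial_i b^{i\bar\jmath}=0$, so the identity above upgrades to the divergence-form equation
\[
\partial_i\big(b^{i\bar\jmath}\,\partial_{\bar\jmath}v\big)=0.
\]
Since $b^{i\bar\jmath}$ is smooth and, by the equivalence of $g$ and $h$ (hence of every $g_s$ with $g$), uniformly comparable to $\det(g)\,g^{i\bar\jmath}$, dividing by $\det g$ and using the K\"ahler identity $\partial_i(\det g\,g^{i\bar\jmath})=0$ shows that this is a uniformly elliptic divergence-form equation on $(M,g)$ with respect to $dV_g$, solved by the bounded function $v$.

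The final step is a Liouville theorem. Because $(M,g)$ is complete with $\Ric(g)\ge 0$, it enjoys volume doubling (Bishop--Gromov) and a scale-invariant Poincar\'e inequality (Buser), so by the theory of Grigor'yan and Saloff-Coste the scale-invariant elliptic Harnack inequality holds for uniformly elliptic divergence-form operators comparable to $\Delta_g$. Applying Harnack to $\sup_M v-v\ge 0$ on balls of radius $R\to\infty$ forces $v$ to be constant, hence $v_\ijb=0$ and $g=h$.

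I expect the Liouville step to be the main obstacle. The delicate points are that only $\Ric(g)\ge 0$ is assumed --- with possibly unbounded curvature and no a priori bounded geometry --- so one must be sure the Harnack/Moser machinery runs on the metric measure space $(M,g,dV_g)$ alone; that the chartwise divergence structure patches to a genuine global divergence-form operator; and that uniform ellipticity really follows, which rests entirely on the equivalence of $g$ and $h$ propagating to each interpolating $g_s$. The subharmonicity $\Delta_g v\ge 0$ and its twin $\Delta_h v\le 0$ serve as consistency checks, but it is the divergence form produced by the determinant equation that removes the standard counterexamples and makes the Liouville conclusion available.
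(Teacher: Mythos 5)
Your proof is correct and follows essentially the same route as the paper: interpolate along $g_s = g + s\,\partial\bar\partial v$, linearize the determinant condition to obtain a uniformly elliptic second-order equation satisfied by the bounded function $v$, and conclude that $v$ is constant by the Liouville theorem of Grigor'yan and Saloff-Coste, which applies because $\mathrm{Ric}(g)\ge 0$ gives volume doubling and the Poincar\'e inequality. The only (cosmetic) difference is that you differentiate $\det$ and exhibit the divergence structure explicitly via the divergence-free cofactor identity, whereas the paper differentiates $\log\det$ and asserts that the averaged coefficient $a^{i\bar\jmath}=\int_0^1 g^{i\bar\jmath}(s)\,ds$ comes from a uniformly equivalent K\"ahler metric; your version is, if anything, slightly more careful on exactly the point needed to invoke the divergence-form Harnack/Liouville machinery.
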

\begin{proof} We sketch the proof.  Since
\begin{equation*}
\begin{split}
  0 & =\int_0^1\frac{\p }{\p s}\log \det (g_\ijb+sv_\ijb
)  \\
    &=\lf( \int_0^1 g^{i\bar j}(s)ds\ri)v_\ijb
\end{split}
\end{equation*}
where $g_\ijb(s)= g_\ijb+sv_\ijb$. Hence $v$ satisfies $a^{i\bar
j}v_\ijb=0$ for some K\"ahler metric $a_\ijb$ which is uniformly
equivalent to $g$. As $g$ has nonnegative Ricci curvature, $v$
must be constant by \cite{gr,Saloff-Coste92}.
\end{proof}

We now prove Lemma \ref{uestimates-l1}.

\begin{proof} [Lemma \ref{uestimates-l1}] Let $p_0>n$ be such that
$$
\frac{p_0+1}{n+p_0}(2+\e)>2
$$
and  $p^*=\frac{n(p_0+1)}{n+p_0}>2$.

By the assumption on $f_0$, the volume growth of $(M,g_0)$ and
Lemma \ref{ut-l1}, we see that for any    $T>0$ and $p\ge p^*$,
there is a constant $C$
\begin{equation}\label{mainlem-e1}
\int_M |f(t)|^{p}dV_t\le C
\end{equation}
for all $0\le t\le T$. Here we have used the fact that for any
finite $T$, $ g(t)$ is uniformly equivalent to $g$ for $0\le t\le
T$.

For any even integer $p>p^*+2$,  we have
\begin{equation}\label{s1e4}
\begin{split}
\frac{d}{dt}\int_M |f|^p dV_t=&\frac{d}{dt}\int_M  f^p dV_t\\
&p\int_M  f^{p-1} \Delta_t f dV_t-\int_M  f^p\Delta_t f dV_t\\
=&-p(p-1)\int_M  f ^{p-2}|\nabla_t f|^2_t dV_t +p\int_M  f^{p-1}|\nabla_t f|^2_t   dV_t\\
\le&-\lf(p(p-1) -  p C_1\ri)\int_M  f^{p-2}|\nabla_t f|^2_t
dV_t.
\end{split}\end{equation}
where $C_1=\sup_{M}|f_0|$ and we have used the fact that for any
$t$, $|\nabla f|_t$ is bounded on $M$ and the fact that $p-2>p^*$
to justify the integration by parts. Hence if $p\geq 1+C_1$, then
\begin{equation}\label{mainlem-e2}
\int_M |f(t)|^p dV_t\le \int_M |f_0|^pdV_0<\infty
\end{equation}
We have to improve \eqref{mainlem-e2}.

Let $v=\max\{f,0\}$ and let $\varphi$ be a nonnegative cutoff
function on $M$ independent of $t$. For any $p\ge p^*-1>1$,
\begin{equation}\label{ppart1}
   \begin{split}
   \int_0^T\int_M&\varphi^2 v^p\frac{\p}{\p t}f dV_t
   dt\\
   &=\int_0^T\int_M\varphi^2 v^p\Delta_t fdV_tdt\\
   &=-\int_0^T\int_M pv^{p-1} \varphi^2|\nabla_t v|_t^2 dV_tdt
   -2\int_0^T\int_M v^p \varphi\langle \nabla_t\varphi, \nabla_t v\rangle_t
   dV_tdt\\
   &\le \frac1p\int_0^T\int_M v^{p+1}  |\nabla_t\varphi|_t^2dV_tdt.
   \end{split}
\end{equation}
On the other hand,
\begin{equation}\label{ppart2}
\begin{split}
\int_0^T\int_M&\varphi^2 v^p\frac{\p f}{\p t}  dV_t
   dt\\
   &=-\int_M\varphi^2\int_0^Tv^p(\frac{\p}{\p v
   t})e^{ f_0-f}dtdV_0\\
        &=\frac1{p+1}\int_M\varphi^2 \int_0^T
        \frac{\p}{\p t}\lf[v^{p+1} e^{f_0-f}\ri]dtdV_0\\
        &\quad+\frac1{p+1}\int_M\varphi^2 \int_0^T
v^{p+1} e^{f_0-f}\frac{\p f}{\p t} dtdV_0.
\end{split}
\end{equation}
Combining this with (\ref{ppart1}), we have
\begin{equation}\label{ppart2}
\begin{split}
 \int_M\varphi^2 v^{p+1}e^{f_0-f}dV_t|_{t=T}
&\le \int_M\varphi^2
v^{p+1}dV_t|_{t=0}+\frac{p+1}p\int_0^T\int_M
v^{p+1} | \nabla_t\varphi|_t^2 dV_tdt\\
   &\quad-\int_M\varphi^2 e^{ f_0} \int_0^T
v^{p+1} \sum_{k=0}^\infty  \frac{(-f)^k}{k!}\frac{\p v}{\p
t}
dtdV_0\\
&=\int_M\varphi
v^{p+1}dV_t|_{t=0}+\frac{p+1}p\int_0^T\int_M
v^{p+1} | \nabla_t\varphi|_t^2dV_tdt\\
   &-\int_M\varphi^2 e^{f_0}
  \sum_{k=0}^\infty  \frac{(-1)^kv^{p+k+2}}{k!(p+k+2)}
dV_0|_{t=T}\\
&\quad +\int_M\varphi^2 e^{f_0}
  \sum_{k=0}^\infty  \frac{(-1)^kv^{p+k+2}}{k!(p+k+2)}dV_0|_{t=0}.
\end{split}
\end{equation}
Now

 \begin{equation}\label{ppartc3}
\begin{split}
    &-\int_M\varphi^2 e^{f_0}
  \sum_{k=0}^\infty  \frac{(-1)^kv^{p+k+2}}{k!(p+k+2)}
dV_0|_{t=T} +\int_M\varphi^2 e^{f_0}
  \sum_{k=0}^\infty  \frac{(-1)^kv^{p+k+2}}{k!(p+k+2)}dV_0|_{t=0}\\
   &= -\int_M\varphi^2 v^{p+2}e^{f_0}
  \sum_{k=0}^\infty  \frac{(-1)^kv^{k}}{k!(p+k+2)}
dV_0|_{t=T} \\
&\quad +\int_M\varphi^2 v^{p+2}e^{f_0}
  \sum_{k=0}^\infty  \frac{(-1)^kv^{k}}{k!(p+k+2)}dV_0|_{t=0}\\
&\le C_2\int_M  v^{p+2} dV_t|_{t=T}+C_3\int_M  v^{p+2} dV_t|_{t=0}\\
&\le C_2\int_M  v^{p+2} dV_t|_{t=T}+C_4(p)
\end{split}
\end{equation}
where $C_2, C_3, C_4$ are independent of $T$. Here we have used
the fact that $f(t)$ are uniformly bounded in spacetime. Combine
\eqref{ppart2}  and (\ref{ppartc3}) to give

 \begin{equation}\label{ppartc4}
 \begin{split}
\int_M\varphi^2 v^{p+1}dV_t|_{t=T}&\le
C_5+\frac{p+1}p\int_0^T\int_M v^{p+1}|
\nabla_t\varphi|_t^2dV_tdt\\
&\quad+C_2\int_M  v^{p+2} dV_t|_{t=T}\\
\end{split}
\end{equation}
for some constant independent of $T$. Letting $\varphi$
approach the constant function 1 gives

\begin{equation}\label{ppart3}
\int_M  v^{p+1}dV_t|_{t=T}\le C_5+C_2\int_M  v^{p+2}dV_t|_{t=T}.
\end{equation}

Similarly    one can prove that if $w=\max\{-f,0\}$, then
\begin{equation}\label{ppart4}
 \int_M  w^{p+1}dV_t|_{t=T}\le C_5+C_2\int_M  w^{p+2}dV_t|_{t=T}
\end{equation}
by modifying $C_5$ and $C_2$ if necessary, while still independent of $T$.
Hence we have
\begin{equation}\label{ppart5}
 \int_M  |f|^{p+1}dV_t|_{t=T}\le 2C_5+C_2\int_M  |f|^{p+2}|_{t=T}
\end{equation}
for all $p\ge p^*-1$. By iteration and \eqref{mainlem-e2}, we
conclude that
\begin{equation}\label{ppart5}
 \int_M  |f|^{p^*}dV_t|_{t=T}\le C_5
\end{equation}
for some constant $C_5$ independent of $T$.
\end{proof}

\begin{proof}[Proof of Lemma \ref{uestimates-l2}]
 We may now proceed as in \cite{TY3} using \eqref{ppart5}.  In particular note that for all $t$, $|u|$ also decays like
$r_t^{-2-\e}$ by Lemma \ref{ut-l1} and hence we may integrate by parts as in \cite{Ca}, using \eqref{ppart5} and the Sobolev inequality (\ref{Sobolev}),  to obtain the following for
 $p\ge p_0>p^*$: (here $p_0$ and $p*$ are as in Lemma 2)
\begin{equation}\label{iteration1}
\begin{split}
   \lf(\int_M |u|^{(p+1)\kappa}dV_0\ri)^\frac1\kappa &\le
   C_6p\int_M|u|^p|e^{f_0-f}-1|dV_0\\
   &\le C_7p\int_M|u|^p|f_0-f|dV_0\\
   &\le C_7p\lf(\int_M|u|^{p+1}dV_0\ri)^{\frac{p}{p+1}}
   \lf(\int_M|f_0-f|^{p+1}dV_0\ri)^{\frac1{p+1}}\\
   &\le C_8p\lf(\int_M|u|^{p+1}dV_0\ri)^{\frac{p}{p+1}}
\\&\le C_8p\lf(\int_M|u|^{p+1}dV_0+1\ri).
   \end{split}
\end{equation}
Here $C_8$ is a constant independent of $t, p$ and
$\kappa=n/(n-1)>1$ and we have used \eqref{ppart5} and the fact
that $f(t)$ is uniformly bounded on space and time.
Take $p=p_0$,
we also have:
\begin{equation}\label{iteration2}
\begin{split}
&\lf(\int_M |u|^{(p_0+1)\kappa}dV_0\ri)^\frac1\kappa\\
&\le C_7p_0\int_M|u|^{p_0}|f_0-f|dV_0\\
   &\le
C_7p_0\lf(\int_M|u|^{(p_0+1)\kappa}dV_0\ri)^{\frac{p_0}{(p_0+1)\kappa}}
\lf(\int_M|f_0-f|^{\frac{(p_0+1)\kappa}{(p_0+1)\kappa-p_0}}dV_0\ri)^{1-\frac
{p_0}{(p_0+1)\kappa}}\\
&=C_7p_0\lf(\int_M|u|^{(p_0+1)\kappa}dV_0\ri)^{\frac{p_0}{(p_0+1)\kappa}}
\lf(\int_M|f_0-f|^{p^*} dV_0\ri)^{\frac1 {p^*}}.
\end{split}
\end{equation}
Hence by Lemma 2 we have
\begin{equation}\label{iteration3}
  \lf(\int_M |u|^{(p_0+1)\kappa}dV_0\ri)^\frac1{(p_0+1)\kappa} \le
  C_9
\end{equation}
for some constant $C_9$ independent of $t$. By (\ref{iteration1})
and Young's inequality we have that for $p\ge p_0$:
\begin{equation}\label{iteration4}
\begin{split}
   \int_M |u|^{(p+1)\kappa}dV_0+1&\le (C_8p)^\kappa
   \lf[ \int_M|u|^{p+1}dV_0+1\ri]^\kappa+1\\
   &\le (C_8p)^\kappa\lf[\int_M|u|^{p+1}dV_0+2\ri]^\kappa\\
   &\le (2C_8p)^\kappa\lf[\int_M|u|^{p+1}dV_0+1\ri]^\kappa.
\end{split}
\end{equation}
Hence we have
\begin{equation}\label{iteration5}
\begin{split}
\lf[\int_M |u|^{(p+1)\kappa}dV_0+1\ri]^{\frac1{\kappa(p+1)}}&\le
(2C_8p)^{\frac1{p+1}}
\lf[\int_M|u|^{p+1}dV_0+1\ri]^{\frac1{p+1}}\\
&\le (2C_8(p+1) )^{\frac1{p+1}}
\lf[\int_M|u|^{p+1}dV_0+1\ri]^{\frac1{p+1}},
\end{split}
\end{equation}
 That is to say, for all $q\ge p_0+1$,

\begin{equation}\label{iteration6}
\lf[\int_M |u|^{ \kappa q}dV_0+1\ri]^{\frac1{\kappa q}} \le
  (2C_{8} q )^{\frac1{q}}
\lf[\int_M|u|^{q}dV_0+1\ri]^{\frac1{q}}.
\end{equation}
 By iteration (see \cite{TY3}), it is straight forward to show that
$$
\sup_M|u|\le C_{10}
\lf[\int_M|u|^{(p_0+1)\kappa}dV_0+1\ri]^{\frac1{(p_0+1)\kappa}}\le
C_{11}
$$
for some constants $C_{10}, C_{11}>0$ independent of $t$. Here we have used
(\ref{iteration3}). This completes the proof of the lemma.
\end{proof}

\bibliographystyle{amsplain}

\begin{thebibliography}{10}


\bibitem{Chau} Chau, A., {\sl Convergence of the \K Ricci flow on non-compact
  \K Manifolds},  J. Differential Geom.  66  (2004),  no. 2, 211--232.
\bibitem {Chau05} Chau, A., {\sl Stablility of the K\"ahler-Ricci flow at complete
noncompact K\"ahler Einstein metrics},  Comtemporary
Mathematics \text{v. 367}  (2005), 43--62.

\bibitem{Ca}
Cao, Huai-Dong,
  {\sl Deformation of K\"ahler metrics to Kahler
Einstein metrics on compact Kahler manifolds},
  Invent. Math. 81 (1985), \textbf{no. 2}, 359-372.

\bibitem{Ch}
Chow, Bennet,
  {\sl A Gradient Estimate for the Ricci \K Flow},
  Anals of Global Analysis and Geometry 19 (2001) , 321-325.


\bibitem{EH}Ecker, K. and  Huisken, G., {\sl Interior estimates for
hypersurfaces moving by mean curvature}, Invent. Math.
\textbf{105} (1991),  547--569.

\bibitem{gr} Grigor'yan, A.A.,
  {\sl The heat equation on non-compact Riemannian manifolds.}
  Math. USSR Sbornik, Vol.\textbf{72} (1992), No.1.


\bibitem{Saloff-Coste92} Saloff-Coste, L., {\sl  Uniformly elliptic operators on
Riemannian manifolds}, J. Differential Geom. \textbf{36} (1992),
no. 2, 417--450.




\bibitem{Sh2} Shi, Wan-Xiong,
 {\sl Ricci Flow and the uniformization on
complete non compact Kahler manifolds},
  J. of Differential Geometry. \textbf{45} (1997), no. 1, 94-220.

\bibitem{TY} Tian, G. and Yau S.T.,
 {\sl Existence of Kahler Einstein metrics
on complete Kahler manifolds and their applications to algebraic
geometry},
  in "Mathematical Aspects of String Theory",
edited by S.T. Yau, World Scientific, 1986.



\bibitem{TY3} Tian, G. and Yau S.T.,
 {\sl Complete Kahler manifolds with zero Ricci curvature. II.},
  Invent.\ Math.\  \textbf{no. 106}(1990), no.1, 27-60.






\bibitem{Yau-1978-Ricci-flat}
Yau, S.-T., {\it On the Ricci curvature of a compact K\"ahler
manifold and the complex Monge-Amp¨¨re equation. I}, Comm. Pure
Appl. Math. 31 (1978), no. 3, 339--411.

\bibitem{Yau1}
Yau, S.-T., {\it The role of partial differential equations in
differential geometry}, Proceedings of the International Congress
of Mathematicians (Helsinki, 1978),  pp. 237--250, Acad. Sci.
Fennica, Helsinki, 1980.
\bibitem{Yau-Schwartz-lemma}
Yau, S.-T., {\it A general Schwarz lemma for K\"ahler manifolds},
Amer. J. Math. 100 (1978), no. 1, 197--203.

\end{thebibliography}

\end{document}